\documentclass{amsart}

\usepackage{amssymb,amsmath,color}
\usepackage{amsthm}

\usepackage{url}
\usepackage{tikz}
\usepackage[enableskew]{youngtab}
\usepackage{ytableau,varwidth}

\usepackage[colorlinks=true, allcolors=blue]{hyperref}

\makeatletter
\@namedef{subjclassname@2020}{%
  \textup{2020} Mathematics Subject Classification}
\makeatother

\allowdisplaybreaks

\newcommand{\leg}[2]{\genfrac{(}{)}{}{}{#1}{#2}}

\newcommand{\qbin}[2]{\genfrac{[}{]}{0pt}{}{#1}{#2}_q}
\renewcommand{\Re}{\operatorname{Re}}

\newtheorem{theorem}{Theorem}[section]
\newtheorem{corollary}[theorem]{Corollary}

\newtheorem{lemma}[theorem]{Lemma}

\theoremstyle{definition}

\begin{document}

\title{Congruences for sums\\ of MacMahon's $q$-Catalan polynomials}

\author[T. Amdeberhan]{Tewodros Amdeberhan} 

\address{Department of Mathematics,
Tulane University, New Orleans, LA 70118, USA}
\email{tamdeber@tulane.edu}

\author[R. Tauraso]{Roberto Tauraso}

\address{Dipartimento di Matematica, 
Università di Roma ``Tor Vergata'', 00133 Roma, Italy}
\email{tauraso@mat.uniroma2.it}

\keywords{$q$-Catalan number; $q$-analog congruence; $q$-binomial coefficient, cyclotomic polynomial, roots of unity}
\subjclass[2020]{Primary 11B65, 11A07; Secondary 05A10, 05A19}

\begin{abstract} One variant of the $q$-Catalan polynomials is defined in terms of Gaussian polynomials by $\mathcal{C}_k(q)=\qbin{2k}{k}-q\qbin{2k}{k+1}$. Liu studied congruences of the form $\sum_{k=0}^{n-1} q^k\mathcal{C}_k$ modulo the cyclotomic polynomial $\Phi_n(q)^2$, provided that $n\equiv\pm 1\pmod3$. Apparently the case $n\equiv 0\pmod3$ has been missing from the literature. It is our primary purpose to fill this gap by the current work. In addition, we discuss certain fascinating link to Dirichlet character sum identities.
\end{abstract}

\maketitle

\section{Introduction}

There are several possible $q$-analogs of the Catalan numbers $\frac{1}{k+1}\binom{2k}{k}$. 
Here we consider the MacMahon's $q$-Catalan polynomials which are defined in terms of the $q$-binomial coefficients (Gaussian polynomials) as
$$
\mathcal{C}_k(q):=\frac{1-q}{1-q^{k+1}}\qbin{2k}{k}=\qbin{2k}{k}-q\qbin{2k}{k+1}.
$$
The first few of these $q$-Catalan are $\mathcal{C}_0(q)=\mathcal{C}_1(q)=1$, $\mathcal{C}_2(q)=1+q^2$ and $\mathcal{C}_3(q)=1+q^2+q^3+q^4+q^6$. Notice that $\mathcal{C}_k(q)$ is a polynomial in $q$ and it reduces to the ordinary Catalan number as $q\to 1$. Moreover, $\mathcal{C}_k(q)$ has a natural enumerative meaning. Indeed MacMahon~\cite[Vol. 2, p. 214]{MacMahon15} established that
$$\mathcal{C}_k(q)=\sum_{w}q^{\operatorname{maj}(w)}$$
where $w$ ranges over all ballot sequences $a_1a_2\cdots a_{2k}$ (i. e. any permutation of the multiset $\{0^k, 1^k\}$ such that in any subword $a_1a_2\cdots a_{i}$, there are at least as many $0$s as there are $1$s) and 
$$\operatorname{maj}(w):=\sum_{\{i: a_i>a_{i+1}\}}\!\!\!\!  i$$
is the \emph{major index} of $w$ (see also the survey \cite[Section 3]{FurlingerHofbauer85} and \cite[Problem A43]{Stanley15}).

\noindent In the present work, however, we focus on a problem of number-theoretic interest: congruences.
The second author in \cite[Theorem 6.1]{Tauraso12} achieved the following result 
\begin{align*}
\sum_{k=0}^{n-1}q^k \mathcal{C}_k(q)\equiv \begin{cases}
\displaystyle q^{\lfloor \frac{n}{3}\rfloor}&\text{if $n\equiv 0,1 \pmod{3}$}\\
\displaystyle -1-q^{\frac{2n-1}{3}}&\text{if $n\equiv 2 \pmod{3}$}
\end{cases}\pmod{\Phi_n(q)}
\end{align*}
where
\begin{align*}
\Phi_n(q)=\prod_{\substack{1\le k \le n\\
(n,k)=1}}(q-e^{\frac{2k\pi i}{n}})
\end{align*}
denotes the $n^{th}$-cyclotomic polynomial.

Afterwards, a stronger version modulo $\Phi_n(q)^2$, has been proved by J.-C. Liu in
\cite[Theorem 1]{Liu20},
$$
\sum_{k=0}^{n-1}q^k \mathcal{C}_k(q)\equiv \begin{cases}
q^{\frac{n^2-1}{3}}-\frac{n-1}{3}(q^n-1)\;&\text{if $n\equiv 1\pmod{3}$,}
\\[3pt]
-q^{\frac{n^2-1}{3}}-q^{\frac{n(2n-1)}{3}}\;&\text{if $n\equiv 2\pmod{3}$,}
\end{cases}\pmod{\Phi_n(q)^2}
$$
where the case $n\equiv 0\pmod{3}$ is \emph{not} covered. Our main aim at present is to fill this gap as stated next.

\begin{theorem}\label{mainC}
If $n$ is a positive integer divisible by $3$, then 
$$\sum_{k=0}^{n-1}q^k \mathcal{C}_k(q)\equiv
q^{\frac{n(2n+1)}{3}} +\frac{1}{3}(q^n-1)\Big(2+(n+1)q^{\frac{2n}{3}}\Big)
\pmod{\Phi_n(q)^2}.$$
\end{theorem}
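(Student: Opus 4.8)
The plan is to work modulo $\Phi_n(q)^2$ by exploiting that this polynomial has exactly the primitive $n$th roots of unity as roots, each with multiplicity two. Writing $D(q)$ for the difference between the two sides of Theorem~\ref{mainC}, and noting $D(q)\in\mathbb Z[q]$ with $\Phi_n$ separable, we have $\Phi_n(q)^2\mid D(q)$ if and only if $D(\zeta)=0$ and $D'(\zeta)=0$ for a primitive $n$th root of unity $\zeta$ (equivalently for all, by Galois conjugacy). The value condition $D(\zeta)=0$ is essentially free: reducing the claimed right-hand side modulo $\Phi_n(q)$ annihilates the $(q^n-1)$-factor and leaves $q^{n(2n+1)/3}$, which collapses to $q^{n/3}$ because $(2n^2+n)/3\equiv n/3\pmod n$ when $3\mid n$, matching the residue supplied by the congruence of \cite{Tauraso12}. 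Hence the genuinely new content lies entirely in the first-order term, i.e.\ in matching the coefficient of $(q^n-1)$.

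To capture that first-order term cleanly I would avoid explicit derivatives and expand instead in powers of $\varpi:=q^n-1$, using that $\Phi_n(q)^2\mid(q^n-1)^2$, so $\varpi^2\equiv 0\pmod{\Phi_n(q)^2}$ and $\varpi$ behaves as a square-zero ``uniformizer''. The engine is a second-order refinement of the $q$-Lucas theorem: for the Gaussian polynomials occurring here one expects, modulo $\Phi_n(q)^2$, an expansion of the shape $\qbin{an+b}{cn+d}\equiv\binom{a}{c}\qbin{b}{d}\bigl(1+L_{a,b,c,d}(q)\,\varpi\bigr)$, where the leading factor is the usual $q$-Lucas value and $L_{a,b,c,d}$ is a correction extracted from the product formula $\qbin{m}{r}=\prod_{j=1}^m(1-q^j)\big/\bigl(\prod_{j=1}^r(1-q^j)\prod_{j=1}^{m-r}(1-q^j)\bigr)$ by a logarithmic-derivative computation at $\zeta$. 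I would first record this expansion for $\qbin{2k}{k}$ and $\qbin{2k}{k+1}$, taking care with the ``resonant'' factors $1-q^j$ having $n\mid j$, where a factor of $\Phi_n$ is produced and the naive derivative must be replaced by a Taylor expansion.

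With these expansions in hand I would substitute into $q^k\mathcal C_k(q)=q^k\qbin{2k}{k}-q^{k+1}\qbin{2k}{k+1}$ and sum over $0\le k\le n-1$, splitting the range according to $\lfloor 2k/n\rfloor\in\{0,1\}$. Because $3\mid n$, the residues $2k\bmod n$ and $k+1\bmod n$, together with the ordinary binomial factors $\binom{a}{c}$ (one of which vanishes) arising in $q$-Lucas, concentrate the nonzero contributions on a few residue classes; these are what manufacture the phases $q^{n/3}$ and $q^{2n/3}$ in the final answer. The value-level sum should reproduce $q^{n(2n+1)/3}$, consistent with the first paragraph, and the $\varpi$-level sum must be shown to equal $\tfrac13\bigl(2+(n+1)q^{2n/3}\bigr)$.

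The main obstacle is the evaluation of this first-order coefficient, which reduces to $q$-harmonic-type sums such as $\sum_j jq^j/(1-q^j)$ restricted to the relevant residues and evaluated at $\zeta$; controlling these, in particular isolating the arithmetic factors $\tfrac23$ and $\tfrac{n+1}{3}$ and handling the parity split of $n/2$ (the cases $n\equiv 0$ versus $n\equiv 3\pmod 6$) in the range decomposition, is the delicate part. I expect the cleanest route is to assemble the entire first-order contribution as a single rational expression in $q$, reduce it modulo $\Phi_n(q)$ using $q^n\equiv 1$, and verify the closed form $\tfrac13\bigl(2+(n+1)q^{2n/3}\bigr)$ directly; a direct check for small values such as $n=3$ provides a useful sanity test of the constants.
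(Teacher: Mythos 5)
Your global strategy --- show that the difference $D(q)$ between the two sides vanishes to second order at the primitive $n$th roots of unity, equivalently expand in $\varpi=q^n-1$ with $\varpi^2\equiv 0\pmod{\Phi_n(q)^2}$ --- is sound, and the value-level check via \cite{Tauraso12} is correct (a minor slip: $D\in\mathbb{Q}[q]$, not $\mathbb{Z}[q]$, which is harmless for the Galois argument). The first concrete problem is with the engine you propose. A multiplicative expansion $\qbin{an+b}{cn+d}\equiv\binom{a}{c}\qbin{b}{d}\bigl(1+L_{a,b,c,d}(q)\,\varpi\bigr)$ cannot do the work you assign to it: for $0\le k<n/2$ the binomials $\qbin{2k}{k}$, $\qbin{2k}{k+1}$ have $a=c=0$, so the expansion is vacuous and no ``concentration on a few residue classes'' occurs on that half of the range --- each such term contributes fully, and only a genuine summation identity can collapse their total; while for $n/2\le k\le n-1$ the Lucas value $\binom{a}{c}\qbin{b}{d}$ is $0$, so a factor $(1+L\varpi)$ multiplying it can never capture the nonzero first-order term $M\varpi$ that these terms actually contribute (an additive, not multiplicative, correction is needed). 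The paper sidesteps both difficulties by never expanding $\qbin{2k}{k}$ and $\qbin{2k}{k+1}$ termwise: it quotes the Liu--Petrov congruence \eqref{LP}, whose right-hand side vanishes for $3\mid n$ since $\leg{n}{3}=0$, for the first piece, and for the second piece invokes the exact identity of \cite{Tauraso13}, which converts $\sum_k q^{k+1}\qbin{2k}{k+1}$ into a sum over $\qbin{2n}{n+k}$, $1\le k\le n$; those binomials are in the Lucas-friendly range, produce an overall factor $(q^n-1)$, and the boundary term $k=n$ is exactly what yields the main term $q^{\frac{n(2n+1)}{3}}$ of Theorem~\ref{mainC}.

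The second, and decisive, gap is that your plan stops precisely where the real work begins. Once the factor $q^n-1$ has been extracted, what remains is to evaluate a root-of-unity sum (your ``$q$-harmonic-type sums'') in closed form, and proving that it equals $\tfrac{1}{6}\bigl(2+(n+1)q^{2n/3}\bigr)$ is exactly the content of Theorem~\ref{mainT}, i.e.\ identity \eqref{main}. In the paper this consumes Sections~\ref{prepare}--\ref{oddcase}: the partial-fraction rearrangement \eqref{mid}, the logarithmic-derivative evaluation \eqref{extan}, a parity split $n=2N$ versus $n=2N-1$ to avoid fractional powers, and then the sums $A_1,\dots,A_6$, $B_i$, $C_i$ with symmetries such as $A_\ell+A_{7-\ell}=N-1$. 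Your proposal replaces all of this with ``assemble the entire first-order contribution as a single rational expression \dots and verify the closed form directly,'' plus a sanity check at $n=3$; that is a restatement of the problem, not a method, and nothing in the outline indicates how the constants $\tfrac{2}{3}$ and $\tfrac{n+1}{3}$ would actually be produced. As it stands, the proposal is an outline whose essential step --- the analogue of Theorem~\ref{mainT} --- is missing.
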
 

\noindent As we will explain in more detail below, the above theorem holds as soon as we prove the following more manageable identity, which is of interest in its own right.

\begin{theorem}\label{mainT}
If $n$ is a positive integer divisible by $3$ and $q$ is a primitive $n^{th}$-root of unity, then
\begin{equation}\label{main}
\sum_{k=1}^{\frac{n}{3}}\frac{(-1)^kq^{\frac{k(3k-1)}{2}}}{1-q^{3k-1}}
+\sum_{k=1}^{\frac{n}{3}-1}\frac{(-1)^kq^{\frac{k(3k+5)}{2}}}{1-q^{3k}}
=\frac{1}{6}\left(2+(n+1)q^{\frac{2n}{3}}\right).
\end{equation}
\end{theorem}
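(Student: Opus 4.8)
The plan is to first collapse the two sums against one another and then treat the surviving ``fluctuating'' part by a roots-of-unity expansion that exposes a Dirichlet-character/Gauss-sum structure. Throughout write $m:=n/3$, let $p_k:=k(3k-1)/2$ denote the pentagonal exponents, and set $\omega:=q^{n/3}$, a primitive cube root of unity, so that the right-hand side of \eqref{main} reads $\tfrac13+\tfrac{n+1}{6}\omega^2$. Since $k(3k+5)/2=p_k+3k$, I would rewrite the second sum using $q^{p_k+3k}=q^{3k}q^{p_k}$ together with $\frac{q^{3k}}{1-q^{3k}}=\frac{1}{1-q^{3k}}-1$, turning the left-hand side into
\begin{equation*}
\sum_{k=1}^{m}\frac{(-1)^k q^{p_k}}{1-q^{3k-1}}+\sum_{k=1}^{m-1}\frac{(-1)^k q^{p_k}}{1-q^{3k}}-\sum_{k=1}^{m-1}(-1)^k q^{p_k}.
\end{equation*}
Now split every denominator via $\frac{1}{1-q^{j}}=\frac12+c_j$, where $c_j:=\frac{1+q^j}{2(1-q^j)}$ is the odd part (so $c_{-j}=-c_j$). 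The bare pentagonal pieces --- the two halves produced by the split together with the standalone sum $\sum_{k=1}^{m-1}(-1)^k q^{p_k}$ --- combine to the single boundary term $\tfrac12(-1)^m q^{p_m}$ with $p_m=n(n-1)/6$, an explicit root of unity. Hence the whole problem reduces to evaluating the two weighted cotangent-type sums $\sum_{k=1}^{m}(-1)^k q^{p_k}c_{3k-1}$ and $\sum_{k=1}^{m-1}(-1)^k q^{p_k}c_{3k}$.

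For these I would exploit the arithmetic of the exponents modulo $n$. A short computation gives the quasi-periodicity $p_{k+m}\equiv p_k+\tfrac{n(n-1)}{6}\pmod n$ and the exact periodicity $q^{3(k+m)-1}=q^{3k-1}$, so the denominators repeat with period $m$ while the weights only pick up a constant factor; this is what makes the range $1\le k\le m$ a genuine fundamental domain. The decisive step is to replace each $\frac{1}{1-q^{j}}$ by the finite expansion
\begin{equation*}
\frac{1}{1-q^{j}}=-\frac1n\sum_{\ell=1}^{n-1}\ell\,q^{j\ell}\qquad(q^n=1,\ q^j\neq1),
\end{equation*}
and then interchange the order of summation. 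The inner sums over $k$ become quadratic Gauss/theta sums $\sum_{k}(-1)^k q^{p_k+3k\ell}$ whose exponent is a quadratic in $k$; evaluating them at the $n$-th root of unity $q$ is exactly the point where Dirichlet characters modulo $3$ (arising from the residue classes $3k-1\equiv2$ and $3k\equiv0$) enter, and it accounts for the appearance of the cube root $\omega^2$ on the right.

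Finally I would perform the outer sum over $\ell$, weighted by $\ell$: it is precisely this linear weight, together with the normalisation $-1/n$, that produces the term linear in $n$, and assembling it with the boundary term $\tfrac12(-1)^m q^{p_m}$ and the Gauss-sum values should yield $\tfrac13+\tfrac{n+1}{6}\omega^2$. I expect the main obstacle to be this last evaluation: controlling the quadratic Gauss sums over the partial range $1\le k\le m$ (rather than a full period) and carrying out the exact bookkeeping of the rational constants, so that the elementary $O(1)$ contributions collapse to $\tfrac13$ while the character-sum contribution is exactly $\tfrac{n+1}{6}\omega^2$. As a guard against sign and normalisation slips I would first confirm the whole scheme on the smallest case $n=3$, where the left-hand side is simply $-q/(1-q^2)$ and equals $\tfrac13+\tfrac23\omega^2$; a telescoping or creative-telescoping certificate for the combined sum would give an alternative, computation-light route should the Gauss-sum bookkeeping prove unwieldy.
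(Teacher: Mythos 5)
Your opening reduction is correct and clean: writing $p_k=k(3k-1)/2$, using $k(3k+5)/2=p_k+3k$ together with $\frac{q^{3k}}{1-q^{3k}}=\frac{1}{1-q^{3k}}-1$, and then splitting $\frac{1}{1-q^{j}}=\frac12+\frac{1+q^{j}}{2(1-q^{j})}$, the bare pentagonal pieces do collapse to the single boundary term $\frac12(-1)^m q^{n(n-1)/6}$, and your quasi-periodicity remarks and the $n=3$ sanity check are also right. The gap is everything after that. The entire content of the theorem now lives in the two weighted sums $\sum_{k=1}^{m}(-1)^k q^{p_k}c_{3k-1}$ and $\sum_{k=1}^{m-1}(-1)^k q^{p_k}c_{3k}$, and for these you give only a plan: expand $\frac{1}{1-q^{j}}=-\frac1n\sum_{\ell=1}^{n-1}\ell\,q^{j\ell}$, interchange sums, and ``evaluate'' the inner sums $\sum_{k}(-1)^k q^{p_k+3k\ell}$. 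Those are \emph{incomplete} quadratic Gauss sums: the range $1\le k\le m$ is only a third of a period, and the twist $(-1)^k$ is not a power of $q$ when $n$ is odd. No method is offered for evaluating them, such sums have no closed form in this generality, and no mechanism is indicated by which the outer $\ell$-weighted sum collapses to $\frac13+\frac{n+1}{6}q^{2n/3}$; you yourself flag this as ``the main obstacle.'' As it stands the proposal is a correct reduction followed by a hope, not a proof.

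It is worth knowing that this hope is adjacent to an open problem. Section~\ref{fini} of the paper runs the same sawtooth expansion in the \emph{opposite} direction: starting from the already-proved identity \eqref{odd4} it derives a character-sum identity, and then poses as an explicit question (due to Tao) whether that identity can be proved directly, without \eqref{odd4}. Your route would essentially require answering that question. The paper's actual proof avoids Gauss sums altogether: after a partial-fraction rearrangement \eqref{mid} (the analogue of your first step) and the logarithmic-derivative evaluation \eqref{explicit}, which handles \emph{complete} sums over arithmetic progressions, it splits into the parity cases $n=2N$ and $n=2N-1$, rewrites everything in the six quantities $A_1,\dots,A_6$, and closes with the elementary reflection symmetry $A_{\ell}+A_{7-\ell}=N-1$. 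If you want to salvage your approach, replacing the incomplete-Gauss-sum step by such complete-sum and symmetry manipulations is the workable path.
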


Notice that, according to \cite[Lemma 3]{Liu20}, our Theorem~\ref{mainT} mirrors that of
\begin{align*}
\sum_{k=1}^{\lfloor \frac{n}{3}\rfloor}\frac{(-1)^kq^{\frac{k(3k-1)}{2}}}{1-q^{3k-1}}
+\sum_{k=1}^{\lfloor \frac{n-1}{3}\rfloor}\frac{(-1)^kq^{\frac{k(3k+5)}{2}}}{1-q^{3k}}
\equiv
\begin{cases}
-\frac{n-1}{6}\quad &\text{if $n\equiv 1\pmod{3},$}\\
0\quad &\text{if $n\equiv 2\pmod{3}$}.
\end{cases}
\end{align*}

\smallskip
The remainder of the paper is organized as follows. In Section~\ref{reduce}, we present a reduction of our main result Theorem~\ref{mainC} into Theorem~\ref{mainT}. Section~\ref{prepare} contains preliminary results which we need towards the proof of Theorem~\ref{mainT}. Sections~\ref{evencase} and \ref{oddcase} split up Theorem~\ref{mainT} according to the parity of $n$ and we provide the corresponding proofs respectively therein. Finally, in Section~\ref{fini} we consider a conversion of one particular identity coming from \eqref{main}, into a trigonometric format and a remarkable implication of it in the language of character sums.

\section{Reducing Theorem \ref{mainC} to Theorem \ref{mainT}} \label{reduce}

We recall that the Gaussian $q$-binomial coefficients are defined as
\begin{align*}
\qbin{n}{k}=\begin{cases}
\displaystyle\frac{(q;q)_n}{(q;q)_k(q;q)_{n-k}} &\text{if $0\leq k\leq n$},\\[10pt]
0 &\text{otherwise,}
\end{cases}
\end{align*}
where the $q$-shifted factorial is given by $(a;q)_n=(1-a)(1-aq)\cdots(1-aq^{n-1})$ for $n\ge 1$ and $(a;q)_0=1$. 

By \cite[Theorem 1.2]{LiuPetrov20}, 
\begin{equation}\label{LP}
\sum_{k=0}^{n-1}q^k\qbin{2k}{k}\equiv 
\leg{n}{3}q^{\frac{n^2-1}{3}}\pmod{\Phi_n(q)^2}
\end{equation}
where $\leg{\,\cdot\,}{\cdot}$ denotes the Legendre symbol.
In the same vain, we also revive the identity \cite[Theorem 4.2]{Tauraso13},
$$
\sum_{k=0}^{n-1}q^{k+1}\qbin{2k}{k+1}=\sum_{k=1}^{n}\leg{k-1}{3}q^{\frac{1}{3}\left(2k^2-k
\left(\frac{k-1}{3}\right)\right)}\qbin{2n}{n+k}.
$$
Let $1\leq k\leq n-1$. Then, the $q$-analog \cite[Theorem 2.2]{Sagan92}
$$\qbin{an+b}{cn+d}\equiv \binom{a}{c} \qbin{b}{d} \pmod{\Phi_n(q)}$$
of Lucas' classical binomial congruence combined with $(1-q^n)\equiv 0 \pmod{\Phi_n(q)}$, and the fact that
$$\qbin{n-1}{k-1}=
\prod_{j=1}^{k-1}\frac{1-q^{n-j}}{1-q^j}
=q^{-\frac{k(k-1)}{2}}
\prod_{j=1}^{k-1}\frac{q^j-q^{n}}{1-q^j}
\equiv (-1)^{k-1} q^{-\frac{k(k-1)}{2}}
\pmod{\Phi_n(q)}$$
immediately imply that
\begin{align*}
\qbin{2n}{n+k}&=\frac{1-q^{2n}}{1-q^{n+k}}\qbin{2n-1}{n+k-1}
\equiv (1-q^{n})\cdot \frac{2}{1-q^{k}}\binom{1}{1}\qbin{n-1}{k-1}\\
&\equiv (q^{n}-1)\cdot \frac{2(-1)^k q^{-\frac{k(k-1)}{2}}}{1-q^{k}} 
\pmod{\Phi_n(q)^2}.
\end{align*}
Therefore, we find reliable verity to declare that 
\begin{align*}
\sum_{k=0}^{n-1}q^{k+1}\qbin{2k}{k+1}&\equiv
\sum_{k=0}^{n-1}\leg{k-1}{3}q^{\frac{1}{3}\left(2k^2-k
\leg{k-1}{3}\right)}\qbin{2n}{n+k}\\
&\equiv 2(q^n-1)\sum_{k=1}^{n-1}
\leg{k-1}{3}q^{\frac{1}{3}\left(2k^2-k
\leg{k-1}{3}\right)}\frac{(-1)^kq^{-\frac{k(k-1)}{2}}}{1-q^k}\\
&\equiv -2(q^n-1)\left(\sum_{k=1}^{\lfloor \frac{n}{3}\rfloor}\frac{(-1)^kq^{\frac{k(3k-1)}{2}}}{1-q^{3k-1}}
+\sum_{k=1}^{\lfloor \frac{n-1}{3}\rfloor}\frac{(-1)^kq^{\frac{k(3k+5)}{2}}}{1-q^{3k}}\right)
\end{align*}
holds modulo $\Phi_n(q)^2$.

Hence, by putting this congruence together with \eqref{LP} into the very definition of $\mathcal{C}_k(q)$, we easily obtain that, when $n$ is divisible by $3$, Theorem \ref{mainC} is indeed equivalent to Theorem \ref{mainT}.

\section{Preparing our proof of Theorem \ref{mainT}} \label{prepare}

Henceforth, we replace $n$ with $3n$ so that our target in \eqref{main} amounts to proving 
\begin{equation}\label{main3n}
\sum_{k=1}^{n}\frac{(-1)^kq^{\frac{k(3k-1)}{2}}}{1-q^{3k-1}}
+\sum_{k=1}^{n-1}\frac{(-1)^kq^{\frac{k(3k+5)}{2}}}{1-q^{3k}}
=\frac{1}{3}+\frac{3n+1}{6}\,q^{2n}.
\end{equation}
 
In order to establish this identity, we need the next two results.

\begin{lemma} We have that for any complex number $z$,
\begin{align}\label{mid}
\sum_{k=1}^n \frac{(-1)^k z^{\frac{k(3k-1)}2}}{1-z^{3k-1}}&+\sum_{k=1}^{n-1}\frac{(-1)^k z^{\frac{k(3k+5)}2}}{1- z^{3k}} 
\nonumber\\
&= \frac{(-1)^{n-1}}{2}\sum_{k=1}^{n-1}\frac{z^{\frac{k(3n+2)}{2}}}{1+z^{\frac{3k}{2}}} 
+\frac{1}{2}\sum_{k=1}^{n-1}\frac{(-1)^k z^{\frac{k(3n+2)}{2}}}{1- z^{\frac{3k}{2}}} \nonumber\\
&\quad + \, \sum_{k=1}^n\frac1{1-z^{3k-1}}-\sum_{k=1}^{\lfloor \frac{n+1}{2}\rfloor}\frac{1}{1-z^{3k-2}}
-\frac{2n-1+(-1)^n}{4}.
\end{align}
\end{lemma}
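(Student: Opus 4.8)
The plan is to prove \eqref{mid} by induction on $n$. Write the left-hand side as $L_n$ and the right-hand side as $R_n$, and decompose $R_n = T_1(n) + T_2(n) + U(n) - C(n)$, where $T_1(n)$ and $T_2(n)$ are the two sums carrying the half-integer powers $z^{3k/2}$, where $U(n) = \sum_{k=1}^n (1-z^{3k-1})^{-1} - \sum_{k=1}^{\lfloor (n+1)/2\rfloor}(1-z^{3k-2})^{-1}$, and where $C(n) = \tfrac14\bigl(2n-1+(-1)^n\bigr)$. The base case $n=1$ is immediate: the half-power sums are empty, and both $L_1$ and $R_1$ collapse to $-z/(1-z^2)$ after using $(1-z^2)^{-1} - (1-z)^{-1} = -z/(1-z^2)$.

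For the inductive step I would compute the two increments separately. On the left, passing from $n$ to $n+1$ merely appends two summands,
\[
L_{n+1} - L_n = \frac{(-1)^{n+1} z^{\frac{(n+1)(3n+2)}{2}}}{1-z^{3n+2}} + \frac{(-1)^n z^{\frac{n(3n+5)}{2}}}{1-z^{3n}}.
\]
On the right the bookkeeping is heavier, because the summand of $T_1,T_2$ itself depends on $n$ through the exponent $z^{k(3n+2)/2}$ and because the prefactor $(-1)^{n-1}$ in $T_1$ flips sign. The crucial simplification is that in each difference the new denominators cancel: pairing the old and new summands of $T_1(n)$ produces numerators $z^{k(3n+2)/2}\bigl(1+z^{3k/2}\bigr)$ over $1+z^{3k/2}$, collapsing the bulk into the geometric sum $\sum_{k=1}^{n-1} z^{k(3n+2)/2}$, while $T_2(n)$ collapses in the same way via $(z^{3k/2}-1)/(1-z^{3k/2}) = -1$.

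The two leftover boundary terms at $k=n$ then combine through $\tfrac{1}{1+z^{3n/2}} + \tfrac{1}{1-z^{3n/2}} = \tfrac{2}{1-z^{3n}}$ to reproduce exactly the second term $(-1)^n z^{n(3n+5)/2}/(1-z^{3n})$ of $L_{n+1}-L_n$, so that piece matches at once. It remains to match the first term of $L_{n+1}-L_n$ against the surviving geometric contribution $(-1)^n\sum_{k\not\equiv n\,(2)} z^{k(3n+2)/2}$ together with $U(n+1)-U(n)$ and $-(C(n+1)-C(n))$.

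This last matching is the step I expect to be the main obstacle, since it is genuinely parity-sensitive: the coefficient $(-1)^n-(-1)^k$ selects precisely the $k$ of opposite parity to $n$, the floor $\lfloor (n+1)/2\rfloor$ advances only when $n$ is even (contributing an extra $(1-z^{3n/2+1})^{-1}$), and $C(n+1)-C(n)$ equals $1$ or $0$ accordingly. Splitting into $n$ odd and $n$ even, the selected indices $k$ form an arithmetic progression in each case, so the geometric sum evaluates in closed form; after clearing $1-z^{3n+2} = (1-z^{3n/2+1})(1+z^{3n/2+1})$ — integer powers when $n$ is even, a genuine half-power factorization when $n$ is odd — everything telescopes to the single term $(-1)^{n+1}z^{(n+1)(3n+2)/2}/(1-z^{3n+2})$, completing the induction. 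The care needed to keep the half-integer exponents and the two floor/parity cases consistent is the only real difficulty; every individual manipulation is elementary.
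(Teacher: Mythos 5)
Your induction goes through, and it is a genuinely different route from the paper's. I checked the details: the base case is right; the sign flip in the prefactor of the first half-power sum turns its increment into a \emph{sum} of old and new terms, so the $1+z^{3k/2}$ denominators cancel exactly as you claim, while the second sum cancels via $(z^{3k/2}-1)/(1-z^{3k/2})=-1$; the two leftover $k=n$ terms recombine through $\frac{1}{1+z^{3n/2}}+\frac{1}{1-z^{3n/2}}=\frac{2}{1-z^{3n}}$ to give the $1/(1-z^{3n})$ piece of $L_{n+1}-L_n$; and the final parity-split matching does telescope in both cases (for $n$ even, with $w=z^{(3n+2)/2}$, the odd-$k$ geometric sum $w(1-w^{n})/(1-w^{2})$ combines with $\frac{1}{1-w^{2}}-\frac{1}{1-w}$ to give $-w^{n+1}/(1-w^{2})$; for $n$ odd, with $v=z^{3n+2}$, the even-$k$ sum together with $\frac{1}{1-v}-1$ gives $v^{(n+1)/2}/(1-v)$). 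One small correction: it is the \emph{even}-$n$ case, not the odd one, that uses the factorization $1-z^{3n+2}=(1-z^{(3n+2)/2})(1+z^{(3n+2)/2})$; when $n$ is odd the selected indices $k$ are even, so every power $z^{k(3n+2)/2}$ is an integer power of $v=z^{3n+2}$ and no half-power factorization is needed at all. By contrast, the paper proves the lemma directly in one pass: it splits $1/(1-z^{3k-1})$ into half-power partial fractions, writes the numerator as $(x^{k}-1)+1$, expands into the double sum $\sum_{k=1}^{n}\sum_{j=0}^{k-1}((-1)^{k}+(-1)^{j})z^{j(3k-1)/2}$, and swaps the order of summation, whereupon the inner sums generate the second left-hand sum and the two half-power sums. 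The paper's derivation is constructive---it produces the right-hand side rather than verifying it---whereas your argument is a routine check once the closed form is known; your increment cancellations are arguably easier to audit, at the cost of requiring the target identity in advance and two parity cases in the bookkeeping.
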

\begin{proof} Employing partial fractions and after further rearrangement, we obtain
\begin{align*}
\sum_{k=1}^n \frac{(-1)^k z^{\frac{k(3k-1)}2}}{1-z^{3k-1}}
&= \frac12\sum_{k=1}^n \frac{(-1)^k z^{\frac{k(3k-1)}2}}{1-z^{\frac{3k-1}2}}+ \frac12\sum_{k=1}^n \frac{(-1)^k z^{\frac{k(3k-1)}2}}{1+z^{\frac{3k-1}2}} \\
&=  \frac12\sum_{k=1}^n \frac{(-1)^k ((z^{\frac{3k-1}2})^k-1+1)}{1-z^{\frac{3k-1}2}}- \frac12\sum_{k=1}^n \frac{-(-z^{\frac{3k-1}2})^k+1-1}{1-(-z^{\frac{3k-1}2})} \\
& = -\frac12\sum_{k=1}^n \sum_{j=0}^{k-1}((-1)^k+(-1)^j) z^{\frac{j(3k-1)}2}  \\
& \qquad \qquad \qquad 
+ \frac12\sum_{k=1}^n \left(\frac1{1+z^{\frac{3k-1}2}}+\frac{(-1)^k}{1-z^{\frac{3k-1}2}}\right)\\
& = -\frac12\sum_{k=1}^n \sum_{j=0}^{k-1}((-1)^k+(-1)^j) z^{\frac{j(3k-1)}2}  \\
& \qquad \qquad \qquad 
+ \sum_{k=1}^n\frac1{1-z^{3k-1}}-\sum_{k=1}^{\lfloor \frac{n+1}{2}\rfloor}\frac{1}{1-z^{3k-2}}.
\end{align*}
Continuing with additional algebraic manipulation leads to
\begin{align*}
&\sum_{k=1}^n\sum_{j=0}^{k-1}((-1)^k+(-1)^j) z^{\frac{j(3k-1)}2}
= \sum_{j=0}^{n-1}\sum_{k=j+1}^n((-1)^k+(-1)^j) z^{\frac{j(3k-1)}2} \\
&\quad= \frac{2n-1+(-1)^n}2+ 2\sum_{j=1}^{n-1}\frac{(-1)^j z^{\frac{j(3j+5)}2}}{1-z^{3j}}
+ \sum_{j=1}^{n-1}\left(\frac{(-1)^n z^{\frac{j(3n+2)}2}}{1+ z^{\frac{3j}2}}-\frac{(-1)^j z^{\frac{j(3n+2)}2}}{1-z^{\frac{3j}2}}\right).
\end{align*}
\noindent
Combining the last two calculations, we find \eqref{mid}.
\end{proof}

\begin{lemma} If $\alpha$ is a primitive $m^{th}$-root of unity, we have
\begin{equation}\label{extan}
\sum_{k=1}^m\frac1{1-z^{-1}\alpha^k}=\frac{m}{1-z^{-m}}.
\end{equation}
\end{lemma}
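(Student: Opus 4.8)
The plan is to read \eqref{extan} as a rational-function identity whose entire content is that the powers of a primitive root enumerate all $m$th roots of unity. Since $\alpha$ is a primitive $m^{th}$-root of unity, the list $\alpha^1,\alpha^2,\dots,\alpha^m$ runs over every $m$th root of unity exactly once, so the factorization
\[
\prod_{k=1}^m(X-\alpha^k)=X^m-1
\]
holds as a polynomial identity in $X$. First I would exploit this by taking the logarithmic derivative of both sides, which produces the partial-fraction expansion
\[
\sum_{k=1}^m\frac{1}{X-\alpha^k}=\frac{mX^{m-1}}{X^m-1}.
\]

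Next I would massage the left-hand side of \eqref{extan} into exactly this shape. Writing each summand as $\frac{1}{1-z^{-1}\alpha^k}=\frac{z}{z-\alpha^k}$ and then specializing the expansion above at $X=z$ gives
\[
\sum_{k=1}^m\frac{1}{1-z^{-1}\alpha^k}=z\sum_{k=1}^m\frac{1}{z-\alpha^k}=z\cdot\frac{mz^{m-1}}{z^m-1}=\frac{mz^m}{z^m-1}=\frac{m}{1-z^{-m}},
\]
where the last equality follows by dividing numerator and denominator by $z^m$. This matches the claimed right-hand side and finishes the argument.

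As a cross-check, an alternative route I would keep in reserve is to expand each term as a geometric series $\frac{1}{1-z^{-1}\alpha^k}=\sum_{j\ge 0}z^{-j}\alpha^{jk}$ for $|z|>1$, interchange the two sums, and use the orthogonality relation $\sum_{k=1}^m\alpha^{jk}=m$ when $m\mid j$ and $0$ otherwise (which is precisely where primitivity of $\alpha$ enters); summing the surviving terms reproduces $\frac{m}{1-z^{-m}}$, and the identity then extends to all $z$ by analytic continuation. There is no serious obstacle here, the statement being an elementary rational-function identity; the only point requiring a line of care is that both sides are equalities of rational functions in $z$, valid for every $z$ avoiding the finite set of $m$th roots of unity (where individual terms have poles), so that specializing $z$ to the relevant primitive root of unity in the later application is legitimate.
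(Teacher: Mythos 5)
Your proof is correct and is essentially the same as the paper's: both take the logarithmic derivative of the factorization $z^m-1=\prod_{k=1}^m(z-\alpha^k)$ and then rewrite each term $\frac{1}{z-\alpha^k}$ as $\frac{1}{z}\cdot\frac{1}{1-z^{-1}\alpha^k}$ to land on the claimed form. The geometric-series cross-check is a fine extra but not needed; the main argument matches the paper line for line.
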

\begin{proof} We introduce the function $f(z):=z^m-1=\prod_{k=1}^m(z-\alpha^k)$. Then, taking the logarithmic derivative, we obtain
$$\frac{f'(z)}{f(z)}=\sum_{k=1}^m\frac1{z-\alpha^k}$$
which means
$$\frac{m}{1-z^{-m}}=\sum_{k=1}^m\frac1{1-z^{-1}\alpha^k}.$$
\end{proof}

We set $q=\exp(\frac{2\pi ij}{3n})$ with $\gcd(j,3n)=1$.
If we apply \eqref{extan} with $\alpha=q^3$ and $z=q$ and $m=n$, there holds
\begin{align} \label{explicit} 
\sum_{k=1}^{n}\frac{1}{1-q^{3k-1}}=\frac{n}{1-q^{-n}}
=\frac{n}{3}(1-q^{n}).
\end{align}

Combining \eqref{explicit} and by using the right-hand side of \eqref{mid} with $z=q$, we put
the central declaration \eqref{main3n} in a form that is more convenient for our method of proof:
\begin{align} \label{main3n_new}
&\frac{(-1)^{n-1}}{2}\sum_{k=1}^{n-1}\frac{q^{\frac{k(3n+2)}{2}}}{1+q^{\frac{3k}{2}}} 
+\frac{1}{2}\sum_{k=1}^{n-1}\frac{(-1)^k q^{\frac{k(3n+2)}{2}}}{1- q^{\frac{3k}{2}}} + \frac{n}3(1-q^n)\nonumber\\
&\qquad -\sum_{k=1}^{\lfloor \frac{n+1}{2}\rfloor}\frac{1}{1-q^{3k-2}} -\frac{2n-1+(-1)^n}{4} = \frac13+\frac{3n+1}6\, q^{2n}.
\end{align}
Next, we proceed to study \eqref{main3n_new} by distinguishing two cases: $n=2N$ and $n=2N-1$. This allows us to circumvent fractional powers of $q$.

\medskip
\noindent 
(a) If $n=2N$ then $q^{3N}=(-1)^j=-1$ because $j$ is odd. Engaging with some algebraic simplifications,  
we find that \eqref{main3n_new} actually tantamount
\begin{align*}
&q^{2N}\sum_{k=1}^{N-1}\frac{q^{k}}{1-q^{6k}}+\sum_{k=1}^{N}\frac{q^{2k-1}}{1-q^{6k-3}}+\frac{2N}{3}(1-q^{2N})\\
&\qquad\qquad -\sum_{k=1}^{N}\frac{1}{1-q^{3k-2}} - N=\frac{1}{3}-\left(N+\frac{1}{6}\right)\,q^{N}.
\end{align*}

\noindent
(b) If $n=2N-1$ then we determine that \eqref{main3n_new} is equivalent to
\begin{align*}
&q^{2N-1}\sum_{k=1}^{N-1}\frac{q^{k}}{1-q^{6k}}+\sum_{k=1}^{N-1}\frac{q^{2k}}{1-q^{6k}}+\frac{2N-1}{3}(1-q^{2N-1})\\
&\qquad\qquad   -\sum_{k=1}^{N}\frac{1}{1-q^{3k-2}} - (N-1)= \frac{1}{3}+\left(N-\frac{1}{3}\right)\,q^{2(2N-1)}.
\end{align*}

\noindent In the next two sections, we intend to furnish the proofs for these two cases.

\section{Proof of the case $n=2N$} \label{evencase}

The condition $\gcd(j,6N)=1$ forces $j=\pm 1 \pmod{6}$. We set $\omega:=q^{N}$ so that $1-\omega+\omega^2=0$ and $\omega^3=-1$. Therefore, it suffices to show the following.

\begin{lemma} \label{firstLemma} We have that
\begin{equation}\label{even}
\omega^2 \sum_{k=1}^{N-1}\frac{q^{k}}{1-q^{6k}}+\sum_{k=1}^{N}\frac{q^{2k-1}}{1-q^{6k-3}}-\sum_{k=1}^{N}\frac{1}{1-q^{3k-2}}=-\frac{N}{3}(1+\omega)+\frac{1}{3}-\frac{\omega}{6}.
\end{equation}
\end{lemma}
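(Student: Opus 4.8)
The plan is to first peel off the part of \eqref{even} that is linear in $N$ and reduce everything to a genuinely $N$-free identity. Using the elementary splitting $\frac{1}{1-u}=\frac{1}{1-u^2}+\frac{u}{1-u^2}$ with $u=q^{3k-2}$, the third sum becomes
\[
\sum_{k=1}^N\frac{1}{1-q^{3k-2}}=\sum_{k=1}^N\frac{1}{1-q^{6k-4}}+A,\qquad A:=\sum_{k=1}^N\frac{q^{3k-2}}{1-q^{6k-4}}.
\]
Since $q^6$ is a primitive $N^{th}$-root of unity, the first sum on the right runs over a complete cycle, so \eqref{extan} (with $\alpha=q^6$, $z=q^4$, $m=N$) evaluates it as $\frac{N}{1-q^{-4N}}=\frac{N}{1-\omega^2}=\frac{N}{3}(1+\omega)$, where I used $\omega^6=1$ and the relation $(1+\omega)(2-\omega)=3$ (a direct consequence of $\omega^2=\omega-1$). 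Substituting this back into \eqref{even}, the two copies of $\frac{N}{3}(1+\omega)$ cancel and the lemma collapses to the $N$-free identity
\[
\omega^2\sum_{k=1}^{N-1}\frac{q^{k}}{1-q^{6k}}+\sum_{k=1}^{N}\frac{q^{2k-1}}{1-q^{6k-3}}-A=\frac13-\frac{\omega}{6},
\]
which is now the real content and will occupy the rest of the argument.

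To attack this reduced identity I would linearize every denominator. Its three summands all have the shape $\frac{w}{1-w^d}$, namely $d=6,\,3,\,2$ with $w=q^{k},\,q^{2k-1},\,q^{3k-2}$ respectively, and the partial fraction $\frac{w}{1-w^d}=\frac1d\sum_{\zeta^d=1}\frac{\zeta^{-1}}{1-\zeta w}$ decomposes each into simple poles. The decisive observation is that $\omega=q^N$ is a primitive $6^{th}$-root of unity, so every $d^{th}$-root of unity with $d\mid 6$ is a power of $\omega$, hence of $q$: the sixth roots are $\{\omega^{r}\}_{r=0}^{5}$, the cube roots are $\{1,\omega^{2},\omega^{4}\}$, and $-1=\omega^{3}$. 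Consequently each $\zeta w$ becomes $q^{(\text{integer linear in }k)}$ and each weight $\zeta^{-1}$ becomes a power of $\omega$. After this substitution the reduced identity turns into a single weighted reciprocal sum $\sum_{m=1}^{6N-1}c_m\,\frac{1}{1-q^{m}}$, in which $c_m$ is an explicit $\omega$-valued coefficient determined by the residue of $m$ and by the block $\lfloor m/N\rfloor$ into which $m$ falls.

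The final step is to evaluate this weighted sum. On each range where $c_m$ is constant and the exponents run through a complete cycle of a sub-root of unity, \eqref{extan} applies and yields only terms constant or linear in $N$; by the very construction of the reduced identity the $N$-linear contributions must cancel, and I would then track the finitely many boundary terms — most notably the \emph{absent} $k=N$ term of the first sum, which is exactly where the poles at $q^{rN}=\omega^{r}$ sit — and simplify the resulting rational expression in $\omega$ by $\omega^2=\omega-1$ to reach $\frac13-\frac{\omega}{6}$. As a reassuring check, for $N=1$ the first sum is empty and the others are singletons, so the identity reduces to $\frac{\omega}{2}-\frac{\omega}{1-\omega^{2}}=\frac{2-\omega}{6}$, which indeed holds.

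I expect the genuine obstacle to lie precisely in this last bookkeeping. The numerator exponents $k$, $2k-1$, $3k-2$ advance at the three different rates $q,q^2,q^3$, each faster than is commensurate with the period of its own denominator, so each of the three sums is an incomplete, block-weighted cycle with \emph{no} closed form in isolation; only the particular combination above is elementary. The delicate work is therefore to match the six blocks produced by the first sum, the three blocks produced by the second, and the two halves of $A$, so that all partial-cycle pieces (together with their $N$-linear parts) annihilate and leave only the constant $\frac13-\frac{\omega}{6}$. The parity split into $n=2N$ and $n=2N-1$ made earlier is what keeps these blocks aligned with honest integer powers of $q$, and I anticipate that the case $n=2N-1$ will require the same scheme with slightly different block boundaries.
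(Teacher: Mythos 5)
Your opening reduction is correct and is genuinely a nice alternative to part of the paper's argument: writing $\frac{1}{1-q^{3k-2}}=\frac{1}{1-q^{6k-4}}+\frac{q^{3k-2}}{1-q^{6k-4}}$ and evaluating $\sum_{k=1}^{N}\frac{1}{1-q^{6k-4}}=\frac{N}{1-q^{-4N}}=\frac{N}{3}(1+\omega)$ via \eqref{extan} (legitimate, since $q^6$ is a primitive $N$-th root of unity and $6k-4\not\equiv 0\pmod{6N}$) strips off the $N$-linear term in one stroke, where the paper instead produces it through the $C_1,C_2$ computation and \eqref{explicit}. Your partial-fraction step is then exactly the paper's \eqref{pfd6} and \eqref{pfd3}, reproducing the quantities $A_1,\dots,A_6$ and $B_1,B_2,B_3$, and your $N=1$ sanity check is right.

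The gap is everything after that: the reduced identity is asserted to follow from complete-cycle evaluations plus ``finitely many boundary terms,'' and that mechanism cannot be executed. None of the blocks your decomposition produces is a complete cycle: the six sums coming from the first term each run over $N-1$ of the $6N$ residues, the three sums from the second term run over $N$ odd exponents, and the two halves of $A$ run over $N$ exponents $\equiv 1\pmod 3$; the deficit from a full cycle is of order $N$ in every case, not $O(1)$, so \eqref{extan} applies to none of them and there are no boundary terms to track. (Your appeal to ``the very construction'' forcing the $N$-linear contributions to cancel is also circular, since that cancellation is part of what must be proved.) What actually closes the argument is a different toolkit, which your plan never identifies: reflection symmetries such as $k\mapsto N-k$, which give $A_\ell+A_{7-\ell}=N-1$; conjugation/reflection identities for the odd-exponent sums, giving $B_2=\frac{N}{2}$ and $B_1+B_3=N$; and the parity merge $B_1=\frac{A_1}{2}+A_2-\frac{A_4}{2}+\omega$ (odd equals all minus even). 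These relations, not cycle-completion, are what make the incomplete blocks annihilate; since your proposal neither states nor proves them, the ``delicate bookkeeping'' you defer is precisely the content of the lemma, and the proof is not complete.
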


\begin{proof}
We find it convenient to express our claim in terms of the quantities
\begin{align*}
&A_1=\sum_{k=1}^{N-1}\frac{1}{1-q^{k}},\quad
A_2=\sum_{k=1}^{N-1}\frac{1}{1-\omega q^{k}},\quad
A_3=\sum_{k=1}^{N-1}\frac{1}{1-\omega^2 q^{k}},\\
&A_4=\sum_{k=1}^{N-1}\frac{1}{1+q^{k}},\quad
A_5=\sum_{k=1}^{N-1}\frac{1}{1+\omega q^{k}},\quad
A_6=\sum_{k=1}^{N-1}\frac{1}{1+\omega^2 q^{k}}.
\end{align*}

\noindent 
(i) By partial fraction decomposition
\begin{align} 
\label{pfd6}
\frac{6x}{1-x^6}=\frac{1}{1-x}-\frac{\omega}{1-\omega^2 x}+\frac{\omega^2}{1+\omega x}
-\frac{1}{1+x}+\frac{\omega}{1+\omega^2 x}-\frac{\omega^2}{1-\omega x}.
\end{align}
Hence, taking $x=q^k$ results in
$$6\sum_{k=1}^{N-1}\frac{q^{k}}{1-q^{6k}}=A_1-\omega^2 A_2-\omega A_3-A_4+\omega^2 A_5+\omega A_6.$$

\noindent 
(ii) Again by partial fraction decomposition
\begin{align}
\label{pfd3}
\frac{3x}{1-x^3}=\frac{1}{1-x}-\frac{\omega}{1-\omega^2 x}+\frac{\omega^2}{1+\omega x}.
\end{align}
Thus, the choice $x=q^{2k-1}$ gives
$$3\sum_{k=1}^{N}\frac{q^{2k-1}}{1-q^{6k-3}}=B_1-\omega B_2+\omega^2B_3,$$
where
$$B_1=\sum_{k=1}^N\frac{1}{1-q^{2k-1}},\quad
B_2=\sum_{k=1}^N\frac{1}{1-\omega^2 q^{2k-1}},\quad
B_3=\sum_{k=1}^N\frac{1}{1+\omega q^{2k-1}}.$$
It is easy to check the properties $B_2=\frac{N}2$ and $B_1+B_3=N$ directly from
\begin{align*}
2\Re(B_2)=B_2+\overline{B_2}&=\sum_{k=1}^N\frac1{1+\omega^{-1}q^{2k-1}}+\sum_{k=1}^N\frac1{1+\omega q^{1-2k}},  \\
                     B_1+B_3&=\sum_{k=1}^N\frac1{1-q^{2k-1}}+\sum_{k=1}^N\frac1{1+q^Nq^{2N+2-2k-1}}.
\end{align*} 
Consequently, we obtain
$$3\sum_{k=1}^{N}\frac{q^{2k-1}}{1-q^{6k-3}}=B_1-\frac{\omega N}{2}+\omega^2(N-B_1)=(2-\omega)	\left(B_1-\frac{N}{2}\right).$$
Moreover, we recognize that
\begin{align*}
B_1&=\sum_{k=1}^{2N-1}\frac{1}{1-q^k}-\sum_{k=1}^{N-1}\frac{1}{1-q^{2k}}=A_1+\frac{1}{1-q^N}+A_2-\frac{A_1+A_4}{2} \\
&=\frac{A_1}{2}+A_2-\frac{A_4}{2}+\omega.
\end{align*}

\noindent 
(iii) Introducing the values
$$C_1:=\sum_{k=1}^N\frac{1}{1-q^{3k-1}},\quad
C_2:=\sum_{k=1}^N\frac{1}{1-q^{3k-2}},$$
we gather the property that (where we use a partial fraction of $\frac1{1-x^3}$)
\begin{align*}
C_1+C_2&=\sum_{k=1}^{3N-1}\frac{1}{1-q^k}-\sum_{k=1}^{N-1}\frac{1}{1-q^{3k}}\\
&=A_1+\frac{1}{1-q^N}+A_2+\frac{1}{1-q^{2N}}+A_3-\frac{A_1+A_3+A_5}{3}\\&=\frac{2A_1}{3}+A_2+\frac{2A_3}{3}-\frac{A_5}{3}+\frac{4\omega+1}{3},
\end{align*}
and taking advantage of \eqref{explicit} implies
\begin{align*}
\frac{2N}{3}(1-q^{2N})
&=\sum_{k=1}^{2N}\frac{1}{1-q^{3k-1}}
=\sum_{k=1}^{N}\frac{1}{1-q^{3k-1}}+\sum_{k=1}^{N}\frac{1}{1-q^{3(2N+1-k)-1}}\\
&=C_1+\sum_{k=1}^{N}\frac{1}{1-q^{-3k+2}}
=C_1 - \sum_{k=1}^{N}\frac{q^{3k-2}}{1-q^{3k-2}}\\
&=C_1+N-C_2.
\end{align*}
The last two derivation lead to
$$C_2=\frac{A_1}{3}+\frac{A_2}{2}+\frac{A_3}{3}-\frac{A_5}{6}+\frac{1+4\omega}{6}+\frac{N(2\omega-1)}{6}.$$

\noindent
Finally, by using (i), (ii), and (iii), we reduce equation  \eqref{even} to
$$\frac16\Big(-(A_1+A_6)+(A_2+A_5)-(A_3+A_4)+N-1-\omega(A_2+A_5-(N-1))\Big)=0$$
which holds true due to the symmetry $A_{\ell}+A_{7-\ell}=N-1$, for $\ell=1, 2$ and $3$.
\end{proof}

\section{Proof of the case $n=2N-1$} \label{oddcase}

\noindent Let $\omega:=-q^{2(2N-1)}=e^{\frac{\pi i}3}$ 
so that $\omega^2=q^{2N-1}$, $1-\omega+\omega^2=0$ and $\omega^3=-1$.

\begin{lemma} \label{secondLemma} We have that
\begin{equation}\label{odd}
\omega^2 \sum_{k=1}^{N-1}\frac{q^{k}}{1-q^{6k}}+\sum_{k=1}^{N-1}\frac{q^{2k}}{1-q^{6k}}-\sum_{k=1}^{N}\frac{1}{1-q^{3k-2}}=-\frac{N}{3}(1+\omega).
\end{equation}
\end{lemma}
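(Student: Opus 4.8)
The plan is to follow, step for step, the proof of Lemma~\ref{firstLemma}. I keep the same six auxiliary quantities $A_1,\dots,A_6$ (now with $\omega=e^{i\pi/3}$ and $q$ a primitive $(6N-3)$-th root of unity, so that $\omega^2=q^{2N-1}$ and $\omega=-q^{1-2N}$), decompose the two sums $S_1:=\sum_{k=1}^{N-1}\frac{q^k}{1-q^{6k}}$ and $S_2:=\sum_{k=1}^{N-1}\frac{q^{2k}}{1-q^{6k}}$ into the $A_i$ by partial fractions, express the remaining term $C_2:=\sum_{k=1}^N\frac{1}{1-q^{3k-2}}$ through an auxiliary sum, and finally reduce \eqref{odd} to a single linear relation among the $A_i$.

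For $S_1$ I reuse \eqref{pfd6}. For $S_2$ I use the companion decomposition
\[
\frac{6x^2}{1-x^6}=\frac{1}{1-x}+\frac{1}{1+x}+\frac{\omega^2}{1-\omega^2 x}+\frac{\omega^2}{1+\omega^2 x}-\frac{\omega}{1-\omega x}-\frac{\omega}{1+\omega x},
\]
which is checked exactly like \eqref{pfd6} by comparing residues at the sixth roots of unity. Setting $x=q^k$, summing, and using $\omega^3=-1$ together with $1+\omega^2=\omega$, the two expansions combine into
\[
6\bigl(\omega^2 S_1+S_2\bigr)=\omega\bigl(A_1+A_3-2A_5\bigr)+(2-\omega)\bigl(A_4-A_6\bigr).
\]
For $C_2$ I introduce $C_1:=\sum_{k=1}^N\frac{1}{1-q^{3k-1}}$ and, as in part~(iii) of Lemma~\ref{firstLemma}, use the partial fraction of $\frac{1}{1-x^3}$ to write $C_1+C_2$ in terms of the $A_i$ (up to a few isolated terms); the relation linking $C_1$ and $C_2$ comes from the full-period evaluations furnished by \eqref{extan}, namely $\sum_{k=1}^{2N-1}\frac{1}{1-q^{3k-1}}=\frac{2N-1}{1+\omega}$ and $\sum_{k=1}^{2N-1}\frac{1}{1-q^{3k-2}}=\frac{2N-1}{2-\omega}$ (the odd analogues of \eqref{explicit}). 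Solving, $C_2$ becomes a combination of the $A_i$ plus explicit constants, and \eqref{odd} collapses to one linear identity among the $A_i$.

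The main obstacle is precisely this last identity, since the symmetry relations available here are not the even-case ones. Because $q$ now has odd order $6N-3$, we have $-1\notin\langle q\rangle$, so the pairing $k\leftrightarrow N-k$ that produced $A_\ell+A_{7-\ell}=N-1$ (it rested on $q^{3N}=-1$) is unavailable; moreover the shift period $2N-1$ of $\omega^2=q^{2N-1}$ does not divide the summation bounds $N$ and $N-1$, so the block decompositions throw off isolated terms such as $\frac{1}{1-q^{3N-1}}$ and $\frac{1}{1-q^{3N-2}}$ that must be tracked. To replace the missing symmetry I would lean on complex conjugation, $\bar q=q^{-1}$ and $\bar\omega=\omega^{-1}=-\omega^2$, which yields $A_i+\overline{A_i}=N-1$ for every $i$ (in particular $A_1+A_3-2A_5$ and $A_4-A_6$ are purely imaginary), on the cube-root-orbit collapses $A_1+A_3+A_5=3\sum_{i=1}^{N-1}\frac{1}{1-q^{3i}}$ and $A_2+A_4+A_6=3\sum_{i=1}^{N-1}\frac{1}{1+q^{3i}}$, and on the period-$(2N-1)$ reflection for $q^3$. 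Assembling these into the correct reflection relations, and confirming that all the stray non-constant terms cancel so that only $-\frac{N}{3}(1+\omega)$ survives, is the heart of the argument.
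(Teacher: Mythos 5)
Your reduction steps are sound and essentially coincide with the paper's: the companion decomposition of $\frac{6x^2}{1-x^6}$ is correct, your combination $6(\omega^2 S_1+S_2)=\omega(A_1+A_3-2A_5)+(2-\omega)(A_4-A_6)$ agrees with the paper's parts (i)--(ii), and your treatment of $C_2$ via the full-period evaluations from \eqref{extan} is equivalent to the paper's part (iii). After these steps, \eqref{odd} does collapse (the explicit constants cancel because $(2N-1)(1-\omega+\omega^2)=0$) to the single linear relation
\begin{equation*}
A_1+A_3-A_4-2A_5+A_6=0,
\end{equation*}
which is exactly the paper's \eqref{odd2}. But this is where your proposal stops, and it is the only hard part of the lemma: you yourself call it ``the heart of the argument'' and leave it unproven. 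That is a genuine gap, not a deferred routine verification.

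Moreover, the tools you propose for that final identity cannot suffice as stated. Complex conjugation gives $A_i+\overline{A_i}=N-1$, i.e., it controls only real parts; but the real part of $A_1+A_3-A_4-2A_5+A_6$ vanishes identically by that very relation, since $\tfrac{N-1}{2}(1+1-1-2+1)=0$. So the entire content of \eqref{odd2} is the vanishing of an imaginary part, about which conjugation says nothing. Likewise, the cube-root collapses concern the symmetric combinations $A_1+A_3+A_5$ and $A_2+A_4+A_6$, not the skew combinations $A_1+A_3-2A_5$ and $A_4-A_6$ that you need. The paper closes the argument with two ideas absent from your plan: first, the partial-fraction identity $\frac{x(1-x)}{1+x^3}=-\frac{2}{1+x}+\frac{1}{1-\omega x}+\frac{1}{1+\omega^2x}$, applied at $x=q^k,\ \omega q^k,\ \omega^2 q^k$, which converts \eqref{odd2} into the single sum \eqref{odd4}, namely $\sum_{k=1}^{N-1}\frac{q^k(1+\omega q^{3k})(1-\omega q^k)}{1-q^{6k}}=0$; second, the substitution $\omega=-q^{-(2N-1)}$ (legitimate precisely because $\omega^2=q^{2N-1}$ is a power of $q$ in the odd case), which rewrites every term as $\frac{1}{1-q^{-j}}$ with integer exponents, so that the identity follows from the elementary re-indexing fact that $\{k:1\le k\le N-1\}\cup\{2N-1-k:1\le k\le N-1\}$ and $\{2k:1\le k\le N-1\}\cup\{2N-1-2k:1\le k\le N-1\}$ are both equal to $\{1,\dots,2N-2\}$. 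Your remark about exploiting the period-$(2N-1)$ structure of $q^3$ points in the right direction, but without these concrete steps (or a genuine substitute for them) the proof does not close.
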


\begin{proof}
We adopt the notations $A_i$ from the previous section.

\smallskip
\noindent (i) By the partial fraction decomposition \eqref{pfd6},
$$6q^{2N-1}\sum_{k=1}^{N-1}\frac{q^{k}}{1-q^{6k}}
=\omega^2(A_1-A_4-\omega (A_3-A_6)+\omega^2 (A_5- A_2)).$$

\noindent 
(ii) By the partial fraction decomposition \eqref{pfd3},
$$6\sum_{k=1}^{N-1}\frac{q^{2k}}{1-q^{6k}}=
A_1+A_4-\omega(A_2+A_5) +\omega^2(A_3+A_6).$$

\noindent 
(iii) We have
\begin{align*}
\sum_{k=1}^{N-1}\frac{1}{1-q^{3k-1}}+\sum_{k=1}^{N}\frac{1}{1-q^{3k-2}}
&=
\sum_{k=1}^{3N-2}\frac{1}{1-q^{k}}-\sum_{k=1}^{N-1}\frac{1}{1-q^{3k}}\\
&=A_1+\sum_{k=0}^{N-1}\frac{1}{1-q^{N+k}}+A_3-\sum_{k=1}^{N-1}\frac{1}{1-q^{3k}}\\
&=A_1+\sum_{k=0}^{N-1}\frac{1}{1-q^{2N-1-k}}+A_3-\sum_{k=1}^{N-1}\frac{1}{1-q^{3k}}\\
&=A_1+\left(N-\frac{1}{1+\omega}-A_5\right)+A_3-\frac{A_1+A_3+A_5}3\\
&=\frac{2A_1}{3}+\frac{2A_3}{3}-\frac{4A_5}{3}+N-\frac{2-\omega}{3}
\end{align*}
and invoking  \eqref{explicit} yields
\begin{align*}
\frac{2N-1}{3}(1-q^{2N-1})
&=\sum_{k=1}^{2N-1}\frac{1}{1-q^{3k-1}}=\sum_{k=1}^{N-1}\frac{1}{1-q^{3k-1}}+\sum_{k=1}^{N}\frac{1}{1-q^{3(2N-1+1-k)-1}}\\
&=\sum_{k=1}^{N-1}\frac{1}{1-q^{3k-1}}+\sum_{k=1}^{N}\frac{1}{1-q^{-3k+2}} \\
&=\sum_{k=1}^{N-1}\frac{1}{1-q^{3k-1}}-\sum_{k=1}^{N}\frac{q^{3k-2}}{1-q^{3k-2}}\\
&=\sum_{k=1}^{N-1}\frac{1}{1-q^{3k-1}}+N-\sum_{k=1}^{N}\frac{1}{1-q^{3k-2}}.
\end{align*}
The last two results imply that
\begin{align*}
6\sum_{k=1}^N\frac1{1-q^{3k-2}}
&=2A_1+2A_3-4A_5+6N+\omega-2+(2N-1)(w^2-1).
\end{align*}
Now, by (i), (ii), and (iii), we are able to restate \eqref{odd} in terms of $A_i$. So, the problem boils down to exhibiting a proof for the relation
\begin{equation}\label{odd2}
A_1+A_3-A_4-2A_5+A_6=0.
\end{equation}
Since
$$
\frac{x(1-x)}{1+x^3}=-\frac{2}{1+x}+\frac{1}{1-\omega x}+\frac{1}{1+\omega^2x}$$
we have
\begin{align*}
&\sum_{k=1}^{N-1}\frac{q^k(1-q^k)}{1+(q^k)^3}=A_6+A_2-2A_4,\\
&\sum_{k=1}^{N-1}\frac{\omega q^k(1-\omega q^k)}{1+(\omega q^k)^3}=A_1+A_3-2A_5,\\
&\sum_{k=1}^{N-1}\frac{\omega^2 q^k(1-\omega^2 q^k)}{1+(\omega^2 q^k)^3}=A_2+A_4-2A_6.
\end{align*}
Therefore we arrive at the following equivalent form of \eqref{odd2}:
\begin{equation}\label{odd3}
\sum_{k=1}^{N-1} \left(\frac{q^k(1-q^k)}{1+(q^k)^3}  +3\, \frac{\omega q^k(1-\omega q^k)}{1+(\omega q^k)^3} 
- \frac{\omega^2 q^k(1-\omega^2 q^k)}{1+(\omega^2 q^k)^3} \right)=0.
\end{equation}

\noindent
Since $1+(\omega q^k)^3=1-q^{3k}$ and $1+(\omega^2q^k)^3=1+q^{3k}$, further algebraic manipulation converts \eqref{odd3} into
\begin{align} \label{odd4} 
\sum_{k=1}^{N-1} \frac{q^k(1+\omega q^{3k})(1-\omega q^k)}{1-q^{6k}} =0.
\end{align}
On the other hand, we note that
$$\frac{z(1+\omega z^3)(1-\omega z)}{1-z^6} 
= \frac{1-\omega^2}{3}\left(\frac{1}{1-z^{-2}} + \frac{1}{1+\omega z^2} - \frac{1}{1-z^{-1}} -\frac{1}{1+\omega z}\right).$$
Letting $z=q^k$ and $\omega=-\omega^{-2}=- q^{-(2N-1)}$, our summand
can be written as
$$\frac{1-\omega^3}{3} \left(\frac{1}{1-q^{-2k}} + \frac{1}{1-
q^{-(2N-1-2k)}} - \frac{1}{1-q^{-k}} -
\frac{1}{1-q^{-(2N-1-k)}}\right).$$
Hence, the claim now becomes
$$\sum_{k=1}^{N-1}\frac{1}{1-q^{-2k}}+\sum_{k=1}^{N-1}\frac{1}{1-
q^{-(2(N-k)-1)}}=
\sum_{k=1}^{N-1}\frac{1}{1-q^{-k}}+\sum_{k=1}^{N-1}\frac{1}{1-
q^{-(2N-1-k)}}$$
and which in turn translates to
$$\sum_{k=1}^{N-1}\frac{1}{1-q^{-2k}}+\sum_{k=1}^{N-1}\frac{1}{1-q^{-(2k-1)}}
=\sum_{k=1}^{N-1}\frac{1}{1-q^{-k}}+\sum_{k=N}^{2N-2}\frac{1}{1-q^{-k}}.$$
Indeed, equality follows here since both sides of the last equation are equal to $\sum_{k=1}^{2N-2}\frac{1}{1-q^{-k}}$. In fact, this is reminiscent of the set-theoretic identity
\begin{align*}
& \  \{k: 1 \leq k \leq N-1\} \cup \{2N-1-k: 1 \leq k \leq N-1\} \\
={}& \{2k: 1 \leq k \leq N-1\} \cup \{2N-1-2k: 1 \leq k \leq N-1\}.
\end{align*}
 The proof is complete.
\end{proof}

\section{Conclusion} \label{fini}

\noindent
For the trigonometric functions enthusiast, the particular equation in \eqref{odd4} can be converted to one that involves only these circular functions. To this end, we utilize the identities
$$\frac{e^{i\theta}}{1-e^{2i\theta}}=\frac{i\csc(\theta)}{2},\qquad\frac1{1+e^{2i\theta}}=\frac{1}{2}-\frac{i\tan(\theta)}{2},$$
and we suggest  rewriting $\pi/6=\pi/2-(2N-1)x$ followed by replacing $\tan$ with $\cot$ via $\tan(\pi/2-t)=\cot(t)$. Here $x=\pi/(6N-3)$ and the outcome runs as
\begin{align*}
&\frac{q^k(1+\omega q^{3k})(1-\omega q^k)}{1-q^{6k}}
=\frac{1-\omega^2}{3}\left(\frac{q^k}{1-q^{2k}}  -\frac{1}{1+\omega q^k} + \frac{1}{1+\omega q^{2k}} \right) \\
&\qquad=  \frac{i(1-\omega^2)}6 \left(\csc(2kx)+\tan\left(\frac{\pi}6+kx\right)-\tan\left(\frac{\pi}6+2kx\right)  \right) \\
&\qquad=  \frac{i(1-\omega^2)}6 (\csc(2kx)+\cot\left((2N-1-k)x\right)-\cot\left((2N-1-2k)x\right).
\end{align*}
Hence \eqref{odd4}, reduces to verifying the trigonometric identity
$$\sum_{k=1}^{N-1} (\csc(2kx)+\cot\left((2N-1-k)x\right)-\cot\left((2N-1-2k)x\right)=0.$$

\smallskip
\noindent
For the more number-theoretic minded reader, we present below a consequence of the identity in \eqref{odd4}. We appreciate Terence Tao for allowing us to include his derivation in this paper.
For the remainder of this section, specialize to the case where $2N-1$ is coprime to $3$.

\noindent
\noindent
Introduce the cube root of unity $\epsilon: = \omega^2= e^{2\pi i/3} = q^{2N-1}$ where $q=e^{\frac{2\pi i}{3(2N-1)}}$.  Expand the numerator in equation \eqref{odd4}:
$$ \sum_{k=1}^{N-1} \frac{q^k + \epsilon^2 q^{2k} - \epsilon^2 q^{4k} - \epsilon q^{5k}}{1-q^{6k}} = 0.$$
From the easily verified ``discrete sawtooth Fourier series" identity
$$ \frac{1}{1-q^{6k}} = -\frac{1}{2N-1} \sum_{j=0}^{2N-2} j q^{6jk}$$
for any $k$ not divisible by $2N-1$ (proven by multiplying out the denominator, cancelling terms, and applying the geometric series formula), we write the preceding identity to prove as
$$ \sum_{j=0}^{2N-2} j \sum_{k=1}^{N-1} (q^{(6j+1)k} + \epsilon^2 q^{(6j+2)k} - \epsilon^2 q^{(6j+4)k} - \epsilon q^{(6j+5)k}) = 0.$$
Since $\gcd(2N-1,3)=1$, we can write $q = \epsilon^{2N-1} \zeta$ for some primitive $(2N-1)^{\mathrm{th}}$ root $\zeta$ of unity.  We then reduce to
\begin{align*}
& \sum_{j=0}^{2N-2} j\sum_{k=1}^{N-1} \left(\epsilon^{(2N-1)k} \zeta^{(6j+1)k} + \epsilon^{2(2N-1)k+2} \zeta^{(6j+2)k}\right) \\
= & \sum_{j=0}^{2N-2}j\sum_{k=1}^{N-1} \left( \epsilon^{(2N-1)k+2} \zeta^{(6j+4)k} + \epsilon^{2(2N-1)k+1} \zeta^{(6j+5)k} \right).
\end{align*}
From Galois theory we can see that the net coefficient of $\zeta^a$ would have to be independent of $a$ for each primitive residue class $a$ mod $2N-1$.  We summarize this discussion in the next declaration.

\begin{corollary}
Let $N>1$ be a natural number such that $2N-1$ is not divisible by $3$, let $\chi: \mathbb{Z} \to \mathbb{C}$ be a non-principal Dirichlet character of period $2N-1$, and let $\epsilon := e^{2\pi i/3}$.
Then, we have the character sum identity
\begin{align} \label{TaoConj}
&\left(\sum_{j=0}^{2N-2} j \cdot \chi(6j+1)\right) \left(\sum_{k=1}^{2N-2} \epsilon^{(2N-1)k} \cdot \chi(k)\right) \\
&\qquad= -\left(\sum_{j=0}^{2N-2} j \cdot \chi(6j+2)\right) \left(\sum_{k=1-N}^{N-1} \epsilon^{2(2N-1)k+2} \cdot \chi(k)\right). \nonumber
\end{align}
\end{corollary}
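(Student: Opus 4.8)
The plan is to carry out, in a fully bookkept form, the Galois extraction sketched in the paragraphs preceding the corollary, thereby converting the single relation among powers of $\zeta$ (the two-sided displayed equation, which is equivalent to the already-established identity \eqref{odd4}) into the bilinear character identity \eqref{TaoConj}. First I would pin down the arithmetic setting: since $2N-1$ is coprime to $3$, the fields $\mathbb{Q}(\epsilon)$ and $\mathbb{Q}(\zeta)$ are linearly disjoint over $\mathbb{Q}$, so $\mathrm{Gal}(\mathbb{Q}(\epsilon,\zeta)/\mathbb{Q}(\epsilon))\cong(\mathbb{Z}/(2N-1)\mathbb{Z})^{\times}$, with $\sigma_t$ acting by $\sigma_t(\zeta)=\zeta^t$ and $\sigma_t(\epsilon)=\epsilon$. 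This is exactly what legitimizes the factorization $q=\epsilon^{2N-1}\zeta$ and lets me treat all powers of $\epsilon$ as scalars fixed by the group.

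The central step is to apply the averaging operator $\sum_t\overline{\chi}(t)\,\sigma_t$ (the sum over $t\in(\mathbb{Z}/(2N-1)\mathbb{Z})^{\times}$) to the two-sided relation. Each monomial $\zeta^{(6j+c)k}$ is sent to $\sum_t\overline{\chi}(t)\zeta^{t(6j+c)k}=\chi((6j+c)k)\,\tau(\overline{\chi})$ by the Gauss-sum evaluation, and multiplicativity of $\chi$ splits this as $\chi(6j+c)\,\chi(k)\,\tau(\overline{\chi})$. Writing $S_c=\sum_{j=0}^{2N-2}j\,\chi(6j+c)$ and letting $K_1,K_2,K_3,K_4$ denote the four $k$-sums $\sum_{k=1}^{N-1}\epsilon^{(2N-1)k}\chi(k)$, $\sum_{k=1}^{N-1}\epsilon^{2(2N-1)k+2}\chi(k)$, $\sum_{k=1}^{N-1}\epsilon^{(2N-1)k+2}\chi(k)$, $\sum_{k=1}^{N-1}\epsilon^{2(2N-1)k+1}\chi(k)$ attached to the exponents $6j+1,6j+2,6j+4,6j+5$, the common nonzero factor $\tau(\overline{\chi})$ divides out and the relation becomes
\[ S_1K_1+S_2K_2=S_4K_3+S_5K_4. \]

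Finally I would collapse these four products to the two in \eqref{TaoConj}. Reflecting $j\mapsto(2N-2)-j$ and using $6(2N-2-j)+4\equiv-(6j+2)$ and $6(2N-2-j)+5\equiv-(6j+1)\pmod{2N-1}$, together with $\sum_{j=0}^{2N-2}\chi(6j+c)=0$ (which holds for every non-principal $\chi$, because $6j+c$ runs over a complete residue system as $\gcd(6,2N-1)=1$), yields $S_4=-\chi(-1)S_2$ and $S_5=-\chi(-1)S_1$. Folding $k\mapsto -k$ and reducing the $\epsilon$-exponents modulo $3$ (using $\epsilon^{-2}=\epsilon$ and $\epsilon^{(2N-1)^2}=\epsilon$) then gives $K_1+\chi(-1)K_4=\sum_{k=1}^{2N-2}\epsilon^{(2N-1)k}\chi(k)$ and $K_2+\chi(-1)K_3=\sum_{k=1-N}^{N-1}\epsilon^{2(2N-1)k+2}\chi(k)$, precisely the full-range and symmetric-range sums of \eqref{TaoConj}. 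Substituting back turns the four-term relation into $S_1\bigl(\sum_{k=1}^{2N-2}\epsilon^{(2N-1)k}\chi(k)\bigr)=-S_2\bigl(\sum_{k=1-N}^{N-1}\epsilon^{2(2N-1)k+2}\chi(k)\bigr)$, which is the asserted identity.

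The \emph{hard part} will be the Gauss-sum evaluation $\sum_t\overline{\chi}(t)\zeta^{tb}=\chi(b)\tau(\overline{\chi})$: this holds for all $b$ (including those with $\gcd(b,2N-1)>1$, where both sides vanish) precisely when $\chi$ is primitive, so for a merely non-principal, imprimitive $\chi$ I must either pass to the primitive character inducing it and track the conductor, or verify directly that the contributions from exponents $b=(6j+c)k$ with $\gcd(b,2N-1)>1$ cancel within this specific combination. The remaining friction is purely clerical: keeping the $\epsilon$-exponents correct modulo $3$ through both reflections $j\mapsto(2N-2)-j$ and $k\mapsto-k$, and checking that the reindexing matches the exact summation ranges $1\le k\le 2N-2$ and $1-N\le k\le N-1$ stated in \eqref{TaoConj}.
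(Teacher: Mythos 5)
Your proposal follows the same route as the paper's own derivation: expand \eqref{odd4} via the sawtooth identity, factor $q=\epsilon^{2N-1}\zeta$, and extract \eqref{TaoConj} from the resulting relation among powers of $\zeta$. Where the paper compresses the extraction into a single sentence (``From Galois theory we can see that the net coefficient of $\zeta^a$ would have to be independent of $a$\dots''), you implement it concretely via the averaging operator $\sum_t\overline{\chi}(t)\sigma_t$, and your bookkeeping is correct: the reflection $j\mapsto 2N-2-j$ together with $\sum_{j}\chi(6j+c)=0$ (valid since $\gcd(6,2N-1)=1$ and $\chi$ is non-principal) does give $S_4=-\chi(-1)S_2$ and $S_5=-\chi(-1)S_1$; and the folding $k\mapsto -k$, using $\epsilon^{(2N-1)^2}=\epsilon$ and $\epsilon^{-1}=\epsilon^{2}$, does turn $K_1+\chi(-1)K_4$ and $K_2+\chi(-1)K_3$ into exactly the sums over $1\le k\le 2N-2$ and $1-N\le k\le N-1$ appearing in \eqref{TaoConj}. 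None of this reduction from four products to two appears in the paper, so on this point your write-up is more complete than the published argument.

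The step you flag as the ``hard part'' --- that $\sum_t\overline{\chi}(t)\zeta^{tb}=\chi(b)\tau(\overline{\chi})$ for all $b$ requires $\chi$ primitive --- is a genuine issue, but it is a gap in the paper's proof just as much as in yours, and the paper does not even acknowledge it. Your argument is complete for primitive $\chi$, hence for every non-principal $\chi$ whenever $2N-1$ is prime. For composite $2N-1$ and an imprimitive non-principal $\chi$, the paper's appeal to Galois theory does not fare better: the claim that the net coefficient of $\zeta^a$ is independent of $a$ over primitive residues is \emph{equivalent} to $\sum_a c_a\chi(a)=0$ holding for every non-principal $\chi$ of period $2N-1$, whereas Galois conjugation only yields that the Fourier transform of the coefficient vector vanishes at frequencies coprime to $2N-1$; for composite modulus this is strictly weaker (any coefficient vector that factors through reduction modulo a proper divisor lies in the relation space without being constant on units). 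So covering imprimitive characters requires precisely the extra input you mention --- descent to the conductor, presumably using \eqref{odd4} at the divisor level, or a direct cancellation check --- and that work is missing from the paper as well. In short: your proposal reproduces the paper's approach with more rigor, and its one unresolved case is one the paper silently shares.
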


\noindent
We conclude with a problem proposed by Terence Tao.

\bigskip
\noindent
\bf Question. \it Is there a direct proof of the identity \eqref{TaoConj} that does not rely on \eqref{odd4}? \rm

\section*{Acknowledgment}

\noindent
The authors warmly thank ``N M'' (from Mathoverflow~\cite{MO}), Gergely Harcos and Terence Tao for valuable discussions surrounding the contents of Section~\ref{fini}.


\begin{thebibliography}{99}

\bibitem{FurlingerHofbauer85}  J.~F\"urlinger and J.~Hofbauer,
\emph{$q$-Catalan numbers},
J. Comb. Theory, Ser. A \textbf{40} (1985), 248--264.

\bibitem{Liu20} J.-C.~Liu, 
\emph{On a congruence involving $q$-Catalan numbers}, 
C. R., Math., Acad. Sci. Paris \textbf{358} (2020), 211--215.

\bibitem{LiuPetrov20} J.-C.~Liu and F.~Petrov, 
\emph{Congruences on sums of $q$-binomial coefficients}, 
Adv. Appl. Math. \textbf{116} (2020), 11 p. 

\bibitem{MacMahon15} P. A.~MacMahon,
\emph{Combinatory Analysis}, 2 vols, Cambridge University Press, Cambridge, 1915-1916. (Reprinted: Chelsea, New York, 1960)

\bibitem{MO} 
\url{https://mathoverflow.net/questions/472815}

\bibitem{Sagan92} B. E.~Sagan,
\emph{Congruence properties of $q$-analogs},
Adv. Math., \textbf{95} (1992), 127-143.

\bibitem{Stanley15} R. P.~Stanley, 
\emph{Catalan numbers}, Cambridge University Press, Cambridge, 2015. 

\bibitem{Tauraso12} R.~Tauraso, 
\emph{$q$-analogs of some congruences involving Catalan numbers}, Adv. in Appl. Math. \textbf{48} (2012), 603--614.

\bibitem{Tauraso13} R.~Tauraso, 
\emph{Some $q$-analogs of congruences for central binomial sums}, Colloq. Math. \textbf{133} (2013), 133--143.
    
\end{thebibliography}
\end{document}